\definecolor{bleu_sombre}{rgb}{0,0,0.6}  \definecolor{rouge_sombre}{rgb}{0.8,0,0}\definecolor{vert_sombre}{rgb}{0,0.6,0}
\theoremstyle{plain}
\newtheorem{theorem}{Theorem}[section]
\newtheorem{lemma}[theorem]{Lemma}
\newtheorem{proposition}[theorem]{Proposition}
\theoremstyle{definition}
\newtheorem{remark}[theorem]{Remark}
\newtheorem*{notation}{Notation}
\newtheorem{definition}[theorem]{Definition}
\newtheorem{assumption}[theorem]{Assumption}
\newcommand{\R}{\mathbb{R}}
\newcommand{\N}{\mathbb{N}}
\renewcommand{\leq}{\leqslant}	
\renewcommand{\geq}{\geqslant}
\numberwithin{equation}{section}
\newcommand{\dd}{\mathrm{d}}
\begin{document}

\title[On the stability of the Schwartz class]{On the stability of the
  Schwartz class \\ under the magnetic Schr\"odinger flow}
\author[G. Boil]{G. Boil} \email{gregory.boil@univ-rennes1.fr}
\author[N. Raymond]{N. Raymond} \email{nicolas.raymond@univ-rennes1.fr
} \author[S. V\~u Ng\d{o}c]{S. V\~u Ng\d{o}c}
\email{san.vu-ngoc@univ-rennes1.fr}

\address[]{IRMAR, Universit\'e de Rennes 1, Campus de Beaulieu,
  F-35042 Rennes cedex, France}

\begin{abstract}
  We prove that the Schwartz class is stable under the magnetic
  Schr\"odinger flow when the magnetic $2$-form is non-degenerate and
  does not oscillate too much at infinity.
\end{abstract}

\maketitle

\section{Introduction}

\subsection{Motivation and context}
This paper is devoted to describing the solutions to the magnetic
Schr\"odinger equation. Let $\mathbf{B}$ be a smooth and closed
$2$-form on $\R^d$. Let $\mathbf{A} : \R^d\to\R^d$ be a $1$-form
(identified with a vector field) such that
$\dd \mathbf{A}=\mathbf{B}$. The magnetic Schr\"odinger operator is
the essentially self-adjoint differential operator
\[\mathscr{L}_{h}=(-ih\nabla-\mathbf{A})^2=\sum_{j=1}^d L^2_{j}\,,\]
where $h>0$ and, for all $j\in\{1,\ldots, j\}$,
$L_{j}=-ih\partial_{j}-A_{j}$. Its domain is given by
\[
\begin{split}
  \mathrm{Dom}(\mathscr{L}_{h}) &= \{\psi\in L^2(\R^d) :
  (-ih\nabla-\mathbf{A})\psi\in L^2(\R^d)\,,
  (-ih\nabla-\mathbf{A})^2\psi\in L^2(\R^d)\}\\
  &=\{\psi\in L^2(\R^d) : (-ih\nabla-\mathbf{A})^2\psi\in
  L^2(\R^d)\}\,.
\end{split}
\]
The time dependent magnetic Schr\"odinger equation is given by
\begin{equation}\label{eq.SE}
  -ih\partial_{t}\psi=\mathscr{L}_{h}\psi\,,\qquad 
  \psi(0)=\psi_{0}\in\mathrm{Dom}(\mathscr{L}_{h})\,.
\end{equation}
By Stone's theorem, this Cauchy problem admits a unique solution,
evolving in the domain of $\mathscr{L}_{h}$, and it is given by
\[
\forall t\in\R\,,\quad
\psi(t)=e^{\frac{it\mathscr{L}_{h}}{h}}\psi_{0}\,.
\]
By unitarity of the flow, we have
\[
\forall t\in\R\,,\quad \|\psi(t)\|=\|\psi_{0}\|\,, \quad
\|\mathscr{L}_{h}\psi(t)\|=\|\mathscr{L}_{h}\psi_{0}\|\,,
\]
where $\|\cdot\|$ denotes the usual norm on $L^2(\R^d)$. This norm
controls the rough phase space localization of the quantum state
$\psi(t)$; a natural question is to know to which extent a strong
phase space localization of $\psi_0$ is preserved by the flow. More
precisely, this paper was inspired by the following rather naive
question. Is it true that
\begin{equation}\label{eq.question}
  \psi_{0}\in\mathscr{S}(\R^d)\quad\Longrightarrow\quad \forall t\in\R\,,\quad\psi(t)\in\mathscr{S}(\R^d) \quad?
\end{equation}
If so, what kind of explicit control do we have in terms of the
Schwartz semi-norms?

These questions are motivated by the recent investigation of the
propagation of coherent states by the magnetic Hamiltonian flow in two
dimensions (see the Ph.~D. thesis of the first
author~\cite{theseBoil}). The present paper gives a positive answer to
\eqref{eq.question}. Our explicit estimates of the Schwartz semi-norms
(in terms of the semiclassical parameter $h$), combined with the use
of the Birkhoff normal form from \cite{RVN15}, turn out to be the key
ingredients in the study by~\cite{BVN18} of the propagation of
coherent states up to times of order $h^{-N}$, for all
$N\in\mathbb{N}$. This gives a quantum analog to the low energy (say
of order $\varepsilon$) classical propagation for times of order
$\varepsilon^{-\infty}$ (see \cite[Theorem 1.2]{RVN15}). Taking into
account the analysis of \cite{HKRVN16}, one can even hope to extend
these results to three dimensions where the classical dynamics has a
more complex behavior.

Independently of this motivation, the answer to \eqref{eq.question}
has an interest of its own, especially because it lives at the
confluence of two closely related domains: hypoellipticity and
semiclassical analysis with magnetic fields. On these vast subjects,
the literature is enormous, and we only refer to \cite{Hormander67,
  HN, HM88, HN92, T02, T04, FH10, R17}. In this paper, we will use
many classical ideas from these two contexts, and provide an
elementary and self-contained presentation.

\subsection{Main results}
Let us now describe our assumptions and results.

Let $\mathscr{P}$ be the class defined by
\[\mathscr{P}=\{\psi\in\mathscr{C}^\infty(\R^d) :
\forall\alpha\in\mathbb{N}^d\,,\exists
(C,m)\in\R_{+}\times\mathbb{R}_{+}\,,\forall x\in\R^d\,,
|\partial^\alpha\psi|\leq C\langle x\rangle^{m} \}\,.\]

The following assumption will hold throughout the paper, where we
identify $\mathbf{B}$ with its antisymmetric matrix obtained in the
usual basis $(\dd x_j\wedge \dd x_k; j<k)$.
\begin{assumption}\label{hyp.0}
  We assume that
  \begin{enumerate}[\rm i.]
  \item $\mathbf{A}$ belongs to $\mathscr{P}$ (in particular $\mathbf{B}\in\mathscr{P}$),
  \item there exists $b_{0}>0$ such that, for all $x\in\R^d$,
    \[\mathrm{Tr}^+\mathbf{B}(x)\geq b_{0}\,,\]
    where $\mathrm{Tr}^+\mathbf{B}(x)$ denotes the sum of the moduli
    of the eigenvalues with positive imaginary part of the matrix
    $\mathbf{B}(x)$,
  \item for all $\alpha\in\mathbb{N}^d$, there exists $C>0$ such that,
    for all $x\in\R^d$,
    $\|\partial^\alpha\mathbf{B}(x)\|\leq C\|\mathbf{B}(x)\|$, where
    $\|\cdot\|$ denotes a norm on the space of matrices.
  \end{enumerate}
\end{assumption}
Assumption \ref{hyp.0} is stronger than really necessary as we can see
in our proofs. In this context, we will use the following lemma (see
\cite[Theorem 2.2]{HM96}).
\begin{lemma}\label{lem.minmag}
  We have
  \[\inf\mathrm{sp}(\mathscr{L}_{h})=h\inf_{x\in\R^d}\mathrm{Tr}^+\mathbf{B}(x)+o(h)\,.\]
  In particular, there exist $C>0$ and $h_{0}>0$, such that, for all
  $h\in(0,h_{0})$, $\mathscr{L}_{h}$ is invertible and
  \[\|\mathscr{L}_{h}^{-1}\|\leq Ch^{-1}\,.\]
\end{lemma}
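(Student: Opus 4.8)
Since this statement is borrowed from \cite[Theorem~2.2]{HM96}, the plan is only to recall how its proof is organized: establish the two-sided estimate on $\inf\mathrm{sp}(\mathscr{L}_h)$ separately, and then read off the invertibility from Assumption~\ref{hyp.0}.ii.

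For the \emph{upper bound}, I would fix $\varepsilon>0$, pick $x_{\star}\in\R^d$ with $\mathrm{Tr}^{+}\mathbf{B}(x_{\star})\leq\inf_{x}\mathrm{Tr}^{+}\mathbf{B}(x)+\varepsilon$, bring $\mathbf{B}(x_{\star})$ to its symplectic normal form by a linear change of variables, and form the product of Landau ground states for the frozen constant-field operator $(-ih\nabla-\tfrac12\mathbf{B}(x_{\star})(x-x_{\star}))^{2}$, whose bottom of spectrum is exactly $h\,\mathrm{Tr}^{+}\mathbf{B}(x_{\star})$. This ground state is a Gaussian concentrated at scale $\sqrt h$; truncating it at scale $\sqrt h\,|\log h|$ and undoing the gauge produces a genuine trial state $\psi_h\in\mathrm{Dom}(\mathscr{L}_h)$. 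Removing by a further gauge the quadratic-gradient part of $\mathbf{A}$ near $x_{\star}$ and bounding the remaining nonlinear part, through Assumption~\ref{hyp.0}.iii, by $C|x-x_{\star}|^{2}\|\mathbf{B}(x_{\star})\|$, one finds that the Rayleigh quotient of $\psi_h$ equals $h\,\mathrm{Tr}^{+}\mathbf{B}(x_{\star})+\mathcal{O}(h^{3/2})$, the truncation contributing only an exponentially small error. The min-max principle then yields $\inf\mathrm{sp}(\mathscr{L}_h)\leq h\inf_{x}\mathrm{Tr}^{+}\mathbf{B}(x)+o(h)$ after letting $\varepsilon\to0$.

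For the \emph{lower bound}, I would introduce a quadratic IMS partition of unity $\sum_j\chi_j^{2}=1$ subordinate to a covering by cells $Q_j$ centered at $x_j$, of size $\ell_j$ comparable to the magnetic length $\sqrt{h/\|\mathbf{B}(x_j)\|}$ (Assumption~\ref{hyp.0}.iii guarantees that $\|\mathbf{B}\|$ varies little on $2Q_j$, so this is meaningful), write
\[
\langle\mathscr{L}_h\psi,\psi\rangle=\sum_j\langle\mathscr{L}_h(\chi_j\psi),\chi_j\psi\rangle-h^{2}\sum_j\big\|\,|\nabla\chi_j|\,\psi\big\|^{2}\,,
\]
freeze $\mathbf{A}$ to its affine part at $x_j$ on each cell (the error again controlled by Assumption~\ref{hyp.0}.iii), gauge the frozen operator to the constant-field Landau operator (bottom $h\,\mathrm{Tr}^{+}\mathbf{B}(x_j)$), and sum. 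The hard part is that a naive single-scale choice of $\ell_j$ makes \emph{both} the IMS error $h^{2}\ell_j^{-2}$ and the freezing error of order $h\,\mathrm{Tr}^{+}\mathbf{B}(x_j)$, so it only produces a lower bound $c\,h\inf_{x}\mathrm{Tr}^{+}\mathbf{B}(x)\|\psi\|^{2}$ with a non-sharp constant $c<1$; recovering the sharp constant requires a finer, iterated localization around the candidate ground energy, which is precisely the technical core of \cite{HM96}. The outcome is $\inf\mathrm{sp}(\mathscr{L}_h)\geq h\inf_{x}\mathrm{Tr}^{+}\mathbf{B}(x)-o(h)$.

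Combining the two bounds gives the stated asymptotics. Since $\inf_{x}\mathrm{Tr}^{+}\mathbf{B}(x)\geq b_0>0$ by Assumption~\ref{hyp.0}.ii, there exists $h_0>0$ such that $\inf\mathrm{sp}(\mathscr{L}_h)\geq\tfrac12 b_0 h$ for all $h\in(0,h_0)$; being self-adjoint and bounded below by $\tfrac12 b_0 h>0$, the operator $\mathscr{L}_h$ is then invertible with $\|\mathscr{L}_h^{-1}\|\leq 2b_0^{-1}h^{-1}=:Ch^{-1}$. Note that for the rest of the paper only this last, weaker lower bound is used, so the delicate sharp-constant step of \cite{HM96} is not essential to the applications.
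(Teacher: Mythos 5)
Your proposal is correct and takes essentially the same route as the paper, which offers no proof of this lemma beyond the citation of \cite[Theorem 2.2]{HM96}: your upper-bound/lower-bound sketch is a faithful outline of that reference's strategy. The deduction of invertibility and the bound $\|\mathscr{L}_h^{-1}\|\leq Ch^{-1}$ from Assumption~\ref{hyp.0}.ii, via self-adjointness and $\inf\mathrm{sp}(\mathscr{L}_h)\geq \tfrac12 b_0 h$ for $h$ small, is exactly the intended (and correct) argument.
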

In the following, we will always assume that $h$ is small enough and
such that $\mathscr{L}_{h}$ is invertible.

\begin{definition}
  For all $n\in\mathbb{N}$, we let
  \[\mathrm{Dom}(\mathscr{L}^n_{h})=\{\psi\in L^2(\R^d) :
  \forall\ell\in\{1,\ldots, n\} :
  \mathscr{L}_{h}^{\ell-1}\psi\in\mathrm{Dom}(\mathscr{L}_{h})\}\,.\]
  The operator $\mathscr{L}^n_{h}$ is defined by induction by
  \[\forall \psi\in\mathrm{Dom}(\mathscr{L}^n_{h})\,,\quad
  \mathscr{L}^n_{h}\psi=\mathscr{L}_{h}(\mathscr{L}^{n-1}_{h}\psi)\,.\]
\end{definition}
The operator $(\mathrm{Dom}(\mathscr{L}^n_{h}),\mathscr{L}^n_{h})$ is
self-adjoint and invertible. The following theorem proves some
magnetic elliptic estimates, showing that iterations of the magnetic
laplacian $\mathscr{L}_h$ control iterations of the magnetic
derivatives $(L_j)_{1\leq j \leq d}$. This will be an important tool
on the proof of the main result of the paper.
\begin{theorem}\label{thm.elliptic}
  Let Assumption~\ref{hyp.0} hold. Let $n\in\mathbb{N}$. There
  exist $h_{0}>0$ and $C>0$ such that, for all $h\in(0,h_{0})$, and
  all $\psi\in\mathrm{Dom}(\mathscr{L}^n_{h})$,
  \begin{equation}\label{eq.elliptic}
    \sum_{\sigma\in\mathfrak{A}(2n)} \|L_{\sigma}\psi\|\leq Ch^{-3n/2}\|\mathscr{L}_{h}^n\psi\|\,,
  \end{equation}
  where, for $k\in\N$,
  $\mathfrak{A}(k)=\cup_{p=0} ^k \{1,\ldots,d\}^{\{1,\ldots,p\}}$, and
  for $p\in\N$, for $\sigma\in\{1,\ldots,d\}^{\{1,\ldots,p\}}$,
  $L_{\sigma} := L_{\sigma(1)}\ldots L_{\sigma(p)}$, with the convention $L_{\emptyset}=\mathrm{Id}$.
\end{theorem}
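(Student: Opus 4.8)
The plan is to prove \eqref{eq.elliptic} by induction on $n$, using at each step a "two magnetic derivatives cost one power of $\mathscr{L}_h$ and one extra power of $h^{-1/2}$" philosophy, together with commutator estimates. The base case $n=0$ is trivial. For the key step, the fundamental identity is
\[
\sum_{j=1}^d \|L_j\phi\|^2 = \langle \mathscr{L}_h\phi,\phi\rangle \leq \|\mathscr{L}_h\phi\|\,\|\phi\|\,,
\]
valid for $\phi\in\mathrm{Dom}(\mathscr{L}_h)$, which already gives the case $n$ small modulo bookkeeping; combined with Lemma~\ref{lem.minmag} ($\|\phi\|\leq \|\mathscr{L}_h\phi\|\,\|\mathscr{L}_h^{-1}\| \leq Ch^{-1}\|\mathscr{L}_h\phi\|$), one sees where the negative powers of $h$ come from. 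More generally, I would first establish that for any $\phi$ in a suitable domain and any multi-index $\sigma$ of length $p$, one controls $\|L_\sigma\phi\|$ by re-pairing the $L_j$'s: write $\|L_\sigma\phi\|^2 = \langle L_{\sigma(1)}\cdots L_{\sigma(p)}\phi,\, L_{\sigma(1)}\cdots L_{\sigma(p)}\phi\rangle$ and move half the derivatives to the other side, producing a term like $\langle \mathscr{L}_h(\text{fewer }L\text{'s})\phi, (\text{fewer }L\text{'s})\phi\rangle$ plus commutator remainders.

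The commutators are where Assumption~\ref{hyp.0} enters. One has $[L_j,L_k] = -ih\,\mathbf{B}_{jk}$, and more generally commuting an $L_\sigma$ past an $L_j$ or past $\mathscr{L}_h$ generates terms involving $\mathbf{B}$ and its derivatives multiplied by strings of $L$'s of shorter total length, each commutation costing a factor $h$. Here point (iii) of Assumption~\ref{hyp.0} (the derivatives of $\mathbf{B}$ are controlled by $\mathbf{B}$ itself) together with point (ii) ($\mathrm{Tr}^+\mathbf{B}\geq b_0>0$, so $\|\mathbf{B}(x)\|\geq c b_0$) is used to absorb the magnetic field itself: a factor of $\mathbf{B}_{jk}(x)\phi$ should be re-expressed, via $b_0 \lesssim \|\mathbf{B}(x)\| \lesssim \mathrm{Tr}^+\mathbf{B}(x)$ and the Melin-type / diamagnetic inequality comparison $\langle \mathrm{Tr}^+\mathbf{B}(x)\,\phi,\phi\rangle \lesssim h^{-1}\langle \mathscr{L}_h\phi,\phi\rangle + \ldots$, in terms of $\mathscr{L}_h$ and lower-order quantities. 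The upshot is that each "un-paired" $\mathbf{B}$-factor can be traded for an extra $h^{-1}\mathscr{L}_h$, which is consistent with the $h^{-3n/2}$ bookkeeping: $2n$ derivatives $=$ $n$ pairings $=$ $n$ powers of $\mathscr{L}_h$, with each pairing plus the accompanying curvature correction costing $h^{-3/2}$ overall.

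Concretely I would proceed as follows. Step 1: record the commutation relations $[L_j,L_k]=-ih\mathbf{B}_{jk}$ and, by induction, a formula for $[\mathscr{L}_h, L_\sigma]$ as a sum of terms $h\, c_\sigma(x)\, L_{\tau}$ with $|\tau|\leq |\sigma|+1$ and $c_\sigma\in\mathscr{P}$ built from $\mathbf{B}$ and its derivatives, hence (by (iii)) pointwise bounded by $C\|\mathbf{B}(x)\|$. Step 2: prove the one-step elliptic bound: for $\phi\in\mathrm{Dom}(\mathscr{L}_h)$, $\sum_j\|L_j\phi\|^2 + \|\,|\mathbf{B}|^{1/2}\phi\,\|^2 \leq C h^{-1}\|\mathscr{L}_h\phi\|\,\|\phi\| + \ldots$ — this is essentially Lemma~\ref{lem.minmag}'s mechanism and lets me carry a $\||\mathbf{B}|^{1/2}\phi\|$ term alongside the $L_j$'s in the induction. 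Step 3: the induction itself — assuming \eqref{eq.elliptic} for all orders $<n$, take $\sigma\in\mathfrak{A}(2n)$, split $L_\sigma = L_{\sigma'}L_{\sigma''}$ with $|\sigma'|,|\sigma''|\leq n$, integrate by parts to get $\|L_\sigma\phi\|^2 \leq |\langle \mathscr{L}_h L_{\sigma''}\phi, L_{\sigma''}\phi\rangle| + (\text{commutators})$, then move $\mathscr{L}_h$ through $L_{\sigma''}$ using Step 1, apply the induction hypothesis to the $<n$-order pieces and Step 2 to handle the residual curvature factors, and close the estimate via Young's inequality (absorbing the top-order term $\sum_{|\sigma|=2n}\|L_\sigma\phi\|$ that reappears on the right with a small constant). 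The main obstacle will be Step 3's bookkeeping: one must check that every commutator remainder genuinely has strictly smaller "weight" (in the combined count of $L$-length and $\mathscr{L}_h$-power, with $h$ tracked correctly) so that the induction closes with exactly the exponent $-3n/2$, and that the curvature factors produced never exceed what Step 2 can reabsorb; getting the exponent bookkeeping to balance, rather than the analysis, is the delicate part.
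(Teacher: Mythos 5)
Your overall strategy --- induction on $n$, pairing the $L_j$'s via the quadratic form identity $\sum_j\|L_j\phi\|^2=\langle\mathscr{L}_h\phi,\phi\rangle$, expansion of $[\mathscr{L}_h,L_\sigma]$ using $[L_j,L_k]=-ih\mathbf{B}_{jk}$, Assumption~\ref{hyp.0}.iii to reduce derivatives of $\mathbf{B}$ to $\mathbf{B}$ itself, and absorption of the reappearing top-order term by Young's inequality --- is exactly the paper's. Two points, however, are genuine gaps rather than bookkeeping.

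First, the weighted estimate in your Step 2 has the wrong strength and is attributed to the wrong mechanism. In the induction, $[\mathscr{L}_h,L_\sigma]\psi$ is a sum of terms $h\,c(x)L_\tau\psi$ with $|c|\lesssim\langle\mathbf{B}\rangle$, and to close the estimate by Cauchy--Schwarz against $L_\sigma\psi$ one needs the \emph{full-power} bound $\|\langle\mathbf{B}\rangle L_\tau\psi\|\leq Ch^{-1}\|L_\tau\psi\|_{\mathscr{L}_h}$, not the half-power bound $\|\,|\mathbf{B}|^{1/2}\phi\|^2\lesssim h^{-1}\|\mathscr{L}_h\phi\|\|\phi\|$; a half-power route would also force you to place $|\mathbf{B}|^{1/2}$ on the top-order factor $L_\sigma\psi$ and re-absorb it, which your sketch does not set up. Moreover, the Melin-type comparison $\langle\mathrm{Tr}^+\mathbf{B}\,\phi,\phi\rangle\lesssim h^{-1}\langle\mathscr{L}_h\phi,\phi\rangle$ you invoke is essentially the hard lower bound of \cite{HM96} (it needs a localization argument, and the sign of $\mathbf{B}_{jk}$ is an issue) and is not what is needed: Lemma~\ref{lem.ell-B} of the paper obtains $\|\langle\mathbf{B}\rangle\psi\|\leq Ch^{-1}\|\psi\|_{\mathscr{L}_h}$ in a few elementary lines by writing $\int|h\mathbf{B}_{k\ell}|^2|\psi|^2=|\langle[L_k,L_\ell]\psi,h\mathbf{B}_{k\ell}\psi\rangle|$ and integrating by parts. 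In particular, the nondegeneracy hypothesis~\ref{hyp.0}.ii plays no role in absorbing the curvature factors --- it is used only for the invertibility of $\mathscr{L}_h$ via Lemma~\ref{lem.minmag} --- whereas your sketch makes it the engine of Step 2.

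Second, the theorem is stated for $\psi\in\mathrm{Dom}(\mathscr{L}_h^n)$, and every integration by parts and commutation in your Step 3 is purely formal at that level of regularity. The paper devotes Section~\ref{sec.density} to this: Agmon-weighted elliptic estimates (Lemma~\ref{lem.agmon}) show that $\mathscr{L}_h^{-n}$ maps a dense class of compactly supported smooth functions into $\mathscr{S}(\R^d)$, so the inequality need only be proved for Schwartz functions and then extended by a Cauchy-sequence argument. Your proposal is silent on this reduction; without it, the manipulations in Step 3 are not justified for general $\psi$ in the domain.
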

In the case where $\mathbf{A}$ is
bounded, Theorem \ref{eq.elliptic} is closely related to \cite[Theorem
3]{T02}, which deals with the context of general G\aa rding
inequalities.

\begin{definition}
  For all $k\in\mathbb{N}$ and all $\psi\in\mathscr{S}(\R^d)$, we let
  \[p_{k}(\psi)=\underset{\underset{ |\alpha|+|\beta|\leq
      k}{(\alpha,\beta)\in\mathbb{N}^{2d}}}{\max}\|x^\alpha \partial^\beta
  \psi\|_{\infty}\,.\]
\end{definition}

We can now state the main result of this paper.
\begin{theorem}\label{thm.evol}
  Let Assumption~\ref{hyp.0} hold.  For all $t\in\R$, we have
  \[e^{\frac{it\mathscr{L}_{h}}{h}}\mathscr{S}(\R^d)\subset
  \mathscr{S}(\R^d)\,.\]
  More precisely, for all $M\in\mathbb{N}^*$, for all
  $k\in\mathbb{N}$, there exist $h_{0}>0$, $C>0$, $N\in\mathbb{N}^*$
  and $K\in\mathbb{N}$, such that, for all $h\in(0,h_{0})$, and for
  all $\psi_{0}\in \mathscr{S}(\R^d)$, and all $t\in[0,h^{-M}]$,
  \[p_{k}(e^{\frac{it\mathscr{L}_{h}}{h}}\psi_{0})\leq
  Ch^{-N}p_{K}(\psi_{0})\,.\]
\end{theorem}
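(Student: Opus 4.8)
The plan is to deduce Schwartz stability from the magnetic elliptic estimates of Theorem~\ref{thm.elliptic} together with the conservation of $\|\mathscr{L}_h^n\psi(t)\|$ under the flow. The key point is to find a convenient weighted Sobolev-type norm, built from the magnetic quantities, which (a) is equivalent (with $h$-dependent constants) to the ordinary Schwartz semi-norms $p_k$, and (b) is \emph{a priori} bounded along the evolution thanks to a commutator argument. A natural candidate is the quantity $\sum \|x^\alpha L_\sigma \psi\|$ for $|\alpha|+|\sigma|\leq k$, or, more robustly, something like $\|\mathscr{L}_h^n \langle x\rangle^{2n} \psi\|$ together with lower-order terms; the magnetic gradient $L_j$ is the right object to iterate because it interacts simply with $\mathscr{L}_h$ through the curvature $\mathbf{B}$ (commutators of the $L_j$ produce $h\,\mathbf{B}$, which lies in $\mathscr{P}$ by Assumption~\ref{hyp.0}).

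First I would record the elementary facts: $[L_j,L_k]=ih\mathbf{B}_{jk}$, $[L_j,x_k]=-ih\delta_{jk}$, and hence $\langle x\rangle$, the $L_j$'s, and $\mathscr{L}_h$ generate, under iterated commutators, an algebra whose elements are of the form (polynomial weight)$\times$(product of $L_j$'s) with polynomial-in-$h$ coefficients, using that all derivatives of $\mathbf{A}$ and $\mathbf{B}$ grow at most polynomially (here Assumption~\ref{hyp.0}.iii and the membership $\mathbf{A}\in\mathscr{P}$ are used). Then, combining the Sobolev embedding, Theorem~\ref{thm.elliptic}, and these commutation rules, I would prove a two-sided comparison: for each $k$ there are $N,K$ and $h$-powers such that $p_k(\psi)\leq Ch^{-N}\sum_{|\alpha|+|\sigma|\leq K}\|x^\alpha L_\sigma\psi\|$ and conversely the right-hand side (for a given order) is bounded by $Ch^{-N'}\sum_{n,|\alpha|}\|\mathscr{L}_h^n x^\alpha \psi\|$ up to reordering. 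The purely magnetic Sobolev norms are controlled by powers of $\mathscr{L}_h$ via Theorem~\ref{thm.elliptic} and Lemma~\ref{lem.minmag} (the latter giving $\|\mathscr{L}_h^{-1}\|\leq Ch^{-1}$, which lets one absorb lower-order terms).

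Next comes the evolution estimate. Let $\psi(t)=e^{it\mathscr{L}_h/h}\psi_0$. By unitarity, $\|\mathscr{L}_h^n\psi(t)\|=\|\mathscr{L}_h^n\psi_0\|$ for all $t$, so the purely magnetic part of the norm is conserved. The weights $x^\alpha$, however, do \emph{not} commute with the flow; I would set $f_\alpha(t):=\|\mathscr{L}_h^n x^\alpha\psi(t)\|^2$ (or rather a finite sum of such terms over $|\alpha|\leq M'$ and $n\leq n_0$) and differentiate in $t$. Using $-ih\partial_t\psi=\mathscr{L}_h\psi$, the derivative produces commutators $[\mathscr{L}_h^n x^\alpha,\mathscr{L}_h]=[\mathscr{L}_h^n,\mathscr{L}_h]x^\alpha+\mathscr{L}_h^n[x^\alpha,\mathscr{L}_h]$; the first bracket vanishes and the second is $\mathscr{L}_h^n$ times a first-order magnetic operator with weight $x^{\alpha-e_j}$, hence is controlled — after commuting it back into the same family and using the two-sided comparison above together with Theorem~\ref{thm.elliptic} — by $C h^{-p}$ times the square root of a \emph{lower-order} member of the same family. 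This yields a triangular (Gronwall-type) differential inequality of the form $|\partial_t f_\alpha(t)|\leq Ch^{-p}\sqrt{f_\alpha(t)}\,\sqrt{\sum_{|\gamma|<|\alpha|}f_\gamma(t)}$, which I would solve by induction on $|\alpha|$: the highest-weight quantity grows at most polynomially in $t$ with $h$-dependent constants, giving a bound $\sum f_\alpha(t)\leq C h^{-N}(1+t)^{K}\,(\text{initial data})$. Restricting to $t\in[0,h^{-M}]$ turns $(1+t)^K$ into another power of $h$, and translating back through the norm comparison gives $p_k(\psi(t))\leq Ch^{-N}p_K(\psi_0)$.

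The main obstacle is the careful bookkeeping in the induction: one must choose the finite family of ``test quantities'' $\|\mathscr{L}_h^n x^\alpha\psi\|$ large enough to be stable under the commutators that arise, yet organize it (by a weight $|\alpha|$ or a combined degree) so that the differential inequalities are genuinely triangular and close. A secondary subtlety is tracking the powers of $h$ at each step — each commutator with $\mathscr{L}_h$ costs a factor $h$ but each use of ellipticity (Theorem~\ref{thm.elliptic}) or of $\mathscr{L}_h^{-1}$ (Lemma~\ref{lem.minmag}) costs negative powers of $h$ — so that the final exponent $N$ is finite and explicit; but since we only claim \emph{existence} of $N$ and $K$, this is a matter of care rather than of genuine difficulty, and the approach is robust.
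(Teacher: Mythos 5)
Your proposal is correct and follows essentially the same route as the paper: you control the quantities $\|\mathscr{L}_h^n x^\alpha\psi(t)\|$ by induction on the weight $|\alpha|$, exploiting that $[x^\alpha,\mathscr{L}_h]$ is $h$ times an operator with one fewer power of $x$ and one magnetic derivative, then convert to Schwartz semi-norms via Theorem~\ref{thm.elliptic}, the commutation relations, and Sobolev embedding. The only difference is cosmetic — you phrase the time estimate as a Gronwall-type differential inequality for $f_\alpha(t)=\|\mathscr{L}_h^n x^\alpha\psi(t)\|^2$, whereas the paper writes the equivalent Duhamel integral formula — and your closing remarks correctly identify the bookkeeping constraint (the family must be triangular in $|\alpha|$ with $n$ allowed to grow as $|\alpha|$ decreases, as in the paper's condition $|\beta|+\nu\leq\kappa+n$).
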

Theorem \ref{thm.evol} is related to the (pseudo-differential)
analysis in \cite[Section 7]{T04}. In this work, under the assumption
that the derivatives of order two or higher of the symbol of the
propagator should be bounded, a parametrix of the evolution operator
was constructed. Closely related is also the paper~\cite[Corollary
2.11]{Rob07}, based on the analysis of coherent states, where the
derivatives of order three or higher of the symbol have to be
bounded. Our approach here is more directly related to the structure
of the magnetic Laplacian, and is reminiscent of the analysis
in~\cite{Hormander67} of the ellipticity of certain algebras of
non-commuting vector fields.

\subsection{Organisation of the proofs}
In Section \ref{sec.elliptic}, we prove Theorem \ref{thm.elliptic} by
using a regularization argument involving exponentially weighted
estimates and commutator estimates. In Section \ref{sec.evol}, we
apply our magnetic elliptic estimates to prove Theorem \ref{thm.evol}.

\newpage
\section{Magnetic elliptic estimates}\label{sec.elliptic}
This section is devoted to the proof of Theorem \ref{thm.elliptic}.

\subsection{Density argument}\label{sec.density}
Let us explain here why it is sufficient to prove Theorem
\ref{thm.elliptic} for $\psi\in\mathscr{S}(\R^d)$.

Let us consider $f\in L^2(\R^d)$. There is a unique
$\psi\in\mathrm{Dom}(\mathscr{L}^n_{h})$ such that
$\mathscr{L}^n_{h}\psi=f$. Consider
$f_{k}\in\mathscr{C}^\infty_{0}(\R^d)$ converging to $f$ in
$L^2(\R^d)$ and consider the unique
$\psi_{k}\in\mathrm{Dom}(\mathscr{L}^n_{h})$ such that
$\mathscr{L}^n_{h}\psi_{k}=f_{k}$. Note that, by continuity of
$(\mathscr{L}^n_{h})^{-1}$, $\psi_{k}$ converges to $\psi$ in
$L^2(\R^d)$.

\begin{lemma}\label{lem.agmon}
  We let
  \[H^\infty_{\exp}(\R^d)=\{\psi\in L^2(\R^d) : \forall
  \alpha\in\mathbb{N}^d\,,\exists \beta>0 : e^{\beta\langle
    x\rangle}\partial^\alpha\psi\in L^2(\R^d)\}\,.\]
  Consider $f\in H_{\exp}^\infty(\R^d)$. Then, the unique solution
  $u\in\mathrm{Dom}(\mathscr{L}_{h})$ to $\mathscr{L}_{h}u=f$
  satisfies $u\in H^\infty_{\exp}(\R^d)$.
\end{lemma}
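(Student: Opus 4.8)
The plan is to establish the result in two stages: first an $L^2$-type Agmon estimate showing that $e^{\beta\langle x\rangle}u\in L^2(\R^d)$ for some $\beta>0$, and then a bootstrap on the magnetic derivatives to upgrade this to $u\in H^\infty_{\exp}(\R^d)$. For the first stage, I would work with the conjugated operator. Fix a weight $\Phi$ that behaves like $\beta\langle x\rangle$ but is bounded (e.g.\ $\Phi_\varepsilon(x)=\beta\langle x\rangle/(1+\varepsilon\langle x\rangle)$), so that $e^{\Phi_\varepsilon}u\in\mathrm{Dom}(\mathscr{L}_h)$ with uniform-in-$\varepsilon$ control once the estimate is in place. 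The basic identity is
\[
\mathrm{Re}\,\langle e^{\Phi_\varepsilon}\mathscr{L}_h u, e^{\Phi_\varepsilon}u\rangle
= \|(-ih\nabla-\mathbf{A})(e^{\Phi_\varepsilon}u)\|^2 - h^2\|\,|\nabla\Phi_\varepsilon|\,e^{\Phi_\varepsilon}u\|^2\,,
\]
obtained by expanding the magnetic Laplacian and moving the (real, hence commuting with $\mathbf{A}$) weight through. Using Lemma \ref{lem.minmag} to bound the first term from below by $(b_0 h + o(h))\|e^{\Phi_\varepsilon}u\|^2$, and choosing $\beta$ small enough that $h^2|\nabla\Phi_\varepsilon|^2\leq h^2\beta^2$ is dominated by $\tfrac12 b_0 h$, one gets
\[
\tfrac12 b_0 h\,\|e^{\Phi_\varepsilon}u\|^2 \leq |\langle e^{\Phi_\varepsilon}f, e^{\Phi_\varepsilon}u\rangle|\,,
\]
and since $f\in H^\infty_{\exp}$ we may absorb $e^{\Phi_\varepsilon}f$ (for $\beta$ below the decay rate of $f$) and conclude $\|e^{\Phi_\varepsilon}u\|\leq C$ uniformly in $\varepsilon$; letting $\varepsilon\to 0$ by Fatou gives $e^{\beta\langle x\rangle}u\in L^2$, and the identity also yields $e^{\Phi_\varepsilon}(-ih\nabla-\mathbf{A})u$ bounded, hence $e^{\beta\langle x\rangle}Lu\in L^2$ for the first-order magnetic derivatives.

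For the second stage, I would bootstrap in the order of the number of derivatives. Suppose inductively that all magnetic monomials $L_\sigma u$ of length $\leq p$ lie in $L^2$ with an exponential weight. To pass to length $p+1$, commute: apply the Agmon-type identity not to $u$ but to $L_\sigma u$, using that $\mathscr{L}_h(L_\sigma u) = L_\sigma(\mathscr{L}_h u) + [\mathscr{L}_h, L_\sigma]u$, where the commutator $[\mathscr{L}_h,L_\sigma]$ is a sum of magnetic monomials of length $\leq p+1$ with coefficients built from $\mathbf{B}$ and its derivatives (this is where Assumption \ref{hyp.0}(iii) enters, controlling $\partial^\alpha\mathbf{B}$ by $\|\mathbf{B}\|$, and part (i), keeping everything polynomially bounded so that it is absorbed by the exponential weight). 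The commutator terms of top length $p+1$ are the dangerous ones; they carry an extra power of $h$ (each $[L_j, L_k] = ih B_{jk}$) and, after the weighted integration by parts, can be absorbed into the positive term $\|(-ih\nabla-\mathbf{A})(e^{\Phi_\varepsilon}L_\sigma u)\|^2$ for $h$ small, together with lower-length terms controlled by the induction hypothesis. Iterating over all $\sigma$ and all lengths gives $u\in H^\infty_{\exp}$ in the magnetic sense, and since $\mathbf{A}\in\mathscr{P}$ the ordinary derivatives $\partial^\alpha u$ differ from magnetic ones by polynomially-bounded terms, so the weighted $L^2$ bounds transfer.

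The main obstacle I expect is the bookkeeping in the inductive commutator step: one must ensure that at each stage the extra factor of $h$ (or of $\mathbf{B}$, via Assumption \ref{hyp.0}(iii)) really does make the top-length commutator term subordinate to the magnetic energy term, rather than merely comparable, and that the polynomial coefficients never defeat the exponential weight — the latter is where the regularized weight $\Phi_\varepsilon$ and Assumption \ref{hyp.0}(i) do the work. A secondary technical point is justifying all integrations by parts: a priori $e^{\Phi_\varepsilon}L_\sigma u$ need not be in the relevant domain, so one should either work first on $\mathscr{C}^\infty_0$ and pass to the limit, or insert an additional cutoff, but since $\Phi_\varepsilon$ is bounded and $u\in\mathrm{Dom}(\mathscr{L}_h)$ this is routine.
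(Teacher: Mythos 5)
Your first stage coincides with the paper's: the regularized Agmon weight, the identity
$\langle\mathscr{L}_h u, e^{2\Phi_\varepsilon}u\rangle=\|(-ih\nabla-\mathbf{A})(e^{\Phi_\varepsilon}u)\|^2-h^2\|\nabla\Phi_\varepsilon\, e^{\Phi_\varepsilon}u\|^2$, the spectral lower bound from Lemma~\ref{lem.minmag} to absorb the $\beta^2h^2$ term, and Fatou's lemma as $\varepsilon\to0$; this yields exactly \eqref{eq.AgmonL2} and the weighted bound on $(-ih\nabla-\mathbf{A})u$. Your second stage, however, is genuinely different. The paper does \emph{not} bootstrap on magnetic monomials $L_\sigma u$ via commutators with $\mathscr{L}_h$; instead it first converts the weighted magnetic gradient bound into a bound on $e^{\tilde\beta\langle x\rangle}\nabla u$ (shrinking the rate to absorb the polynomial growth of $\mathbf{A}$), then rewrites the equation as $-h^2\Delta u=f-|\mathbf{A}|^2u-ih(\nabla\cdot\mathbf{A})u-2ih(\mathbf{A}\cdot\nabla)u$, reads off a weighted $L^2$ bound on $\Delta u$, upgrades to $e^{\beta\langle x\rangle}u\in H^2$ by Fourier transform, and iterates by differentiating the equation. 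What the paper's route buys is that all computations take place in the sense of tempered distributions and the higher regularity of $u$ comes for free from the ellipticity of the flat Laplacian; what your route buys is uniformity with the hypoelliptic machinery of Section~\ref{sec.elliptic} (it is essentially a weighted version of the induction proving Theorem~\ref{thm.elliptic}).

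One caution about your second stage: the justification of the inductive integrations by parts is not quite as routine as you suggest. Boundedness of $\Phi_\varepsilon$ and $u\in\mathrm{Dom}(\mathscr{L}_h)$ do not by themselves guarantee that $L_\sigma u$ is a function to which the Agmon identity can be applied; you need to know a priori that $u$ has enough local regularity (e.g.\ $u\in H^{p+2}_{\mathrm{loc}}$ at step $p$), which must be supplied by local elliptic regularity of $\mathscr{L}_h$ or by a mollification argument, and this has to be arranged without circularity since the whole purpose of the lemma is to feed the density argument of Section~\ref{sec.density}. This is precisely the difficulty the paper's Fourier-transform route sidesteps. With that point addressed, your argument closes: the loss of a fixed fraction of the exponential rate at each step (to absorb the polynomial growth of $\mathbf{B}$, its derivatives, and $\mathbf{A}$) is harmless because the definition of $H^\infty_{\exp}$ allows $\beta$ to depend on $\alpha$.
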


\begin{proof}
  The proof follows from the classical Agmon estimates (see
  \cite{Agmon85, H88}). Consider $\beta>0$ such that
  $e^{\beta\langle x\rangle}f\in L^2(\R^d)$. Let $\varepsilon>0$ and
  $\Phi_\varepsilon=\beta\min(\langle x\rangle,\varepsilon^{-1})$.  We
  have the Agmon formula (see \cite[Section 4.2]{R17}):
  \[\langle\mathscr{L}_{h}u,
  e^{2\Phi_\varepsilon}u\rangle=\int_{\R^d}|(-ih\nabla-\mathbf{A})e^{\Phi_\varepsilon}u|^2
  \dd x-h^2\|\nabla\Phi_\varepsilon e^{\Phi_\varepsilon}u\|^2\,.\]
  In particular,
  \begin{equation}\label{eq.agmonineg}
    \int_{\R^d}|(-ih\nabla-\mathbf{A})e^{\Phi_\varepsilon}u|^2\dd x-\beta^2h^2\|e^{\Phi_\varepsilon}u\|^2\leq \|e^{\Phi_\varepsilon}f\|\|e^{\Phi_\varepsilon}u\|\,.
  \end{equation}
  On the other hand, by Lemma \ref{lem.minmag}, we get, for some
  $c>0$,
  \[(hc-\beta^2h^2)\|e^{\Phi_\varepsilon}u\|^2\leq\int_{\R^d}|(-ih\nabla-\mathbf{A})e^{\Phi_\varepsilon}u|^2\dd
  x-\beta^2h^2\|e^{\Phi_\varepsilon}u\|^2\,.\]
  Choosing $\beta$ small enough, we get, for some $C(h)>0$ independent
  of $\varepsilon$,
  \[\|e^{\Phi_\varepsilon}u\|\leq C(h)\|e^{\Phi_\varepsilon}f\|\,.\]
  Then, we take the limit $\varepsilon\to 0$ and apply Fatou's Lemma
  to find
  \begin{equation}\label{eq.AgmonL2}
  \|e^{\beta\langle x\rangle}u\|\leq C(h)\|e^{\beta\langle
    x\rangle}f\|\,.
 \end{equation} 
  Coming back to \eqref{eq.agmonineg}, and replacing $\beta$ by
  $\tilde\beta<\beta$, we get
  \[\int_{\R^d}|e^{\Phi_\varepsilon}(-ih\nabla-\mathbf{A}-ih\nabla\Phi_{\varepsilon})u|^2\dd x-\tilde\beta^2h^2\|e^{\Phi_\varepsilon}u\|^2\leq \|e^{\Phi_\varepsilon}f\|\|e^{\Phi_\varepsilon}u\|
\,.\]
Thus, 
  \[\begin{split}  
  \frac{h^2}{2}\int_{\R^d}|e^{\Phi_\varepsilon}\nabla u|^2\dd x &\leq \|e^{\Phi_\varepsilon}f\|\|e^{\Phi_\varepsilon}u\|+\tilde\beta^2h^2\|e^{\Phi_\varepsilon}u\|^2+2\|(ih\nabla\Phi_{\varepsilon}+\mathbf{A})e^{\Phi_{\varepsilon}}u\|^2\\
&\leq  \|e^{\Phi_\varepsilon}f\|\|e^{\Phi_\varepsilon}u\|+\tilde\beta^2h^2\|e^{\Phi_\varepsilon}u\|^2+4\|\nabla\Phi_{\varepsilon}e^{\Phi_{\varepsilon}}u\|^2+4\|\mathbf{A}e^{\Phi_{\varepsilon}}u\|^2\,.
\end{split}
 \] 
 Using that $\mathbf{A}\in\mathscr{P}$, \eqref{eq.AgmonL2}, and Fatou's Lemma, we get $e^{\tilde\beta\langle x\rangle}\nabla u\in L^2(\R^d)$.

Considering the equation $\mathscr{L}_{h}u=f$, we get, in the sense of tempered distributions,
\begin{equation}\label{eq.Deltau}
-h^2\Delta u=f-|\mathbf{A}|^2u-ih(\nabla\cdot\mathbf{A})u-2ih(\mathbf{A}\cdot\nabla) u\,.
\end{equation}
  
Noticing that we have just controlled the terms of order at most one, we deduce that, for
  some $\beta>0$,
  \[e^{\beta\langle x\rangle}\Delta u\in L^2(\R^d)\,,\]
and also
  \[
  \Delta \left(e^{\beta\langle x\rangle} u\right)\in L^2(\R^d)\,.
  \]
  By Fourier transform, we get
  \[e^{\beta\langle x\rangle}u\in H^2(\R^d)\,,\]
  which implies that, for all $\alpha\in\mathbb{N}^d$ with
  $|\alpha|\leq 2$,
  \[e^{\beta\langle x\rangle}\partial^\alpha u\in L^2(\R^d)\,.\]
  The higher order derivatives can be controlled by induction (taking
  successive derivatives of \eqref{eq.Deltau}).
\end{proof}

\begin{remark}
  By the Sobolev embeddings, we have
  $H^\infty_{\exp}(\R^d)\subset\mathscr{S}(\R^d)$.
\end{remark}

By Lemma \ref{lem.agmon}, we see that
$\psi_{k}\in\mathscr{S}(\R^d)$. Assume that \eqref{eq.elliptic} holds
for any $\psi \in \mathscr{S}(\R^d)$, with
$\psi=\psi_{k}-\psi_{\ell}$, for all $(k,\ell)\in\mathbb{N}^2$, this
shows that, for all $\sigma\in\mathfrak{A}(2n)$,
$(L_{\sigma}\psi_{n})_{n\in\mathbb{N}}$ is a Cauchy sequence in
$L^2(\R^d)$. Thus, in the sense of distributions,
$L_{\sigma}\psi\in L^2(\R^d)$. It remains to use again
\eqref{eq.elliptic} with $\psi=\psi_{k}$ and to take the limit
$k\to+\infty$.

\subsection{Preliminary lemmas}

\begin{lemma}\label{lem.prelim}
  For all $\psi\in\mathrm{Dom}(\mathscr{L}_{h})$, we have
  \[\|(-ih\nabla-\mathbf{A})\psi\|^2\leq\frac{1}{2}\|\psi\|^2_{\mathscr{L}_{h}}\]
  where
  $\|\psi\|^2_{\mathscr{L}_{h}}= \|\psi\|^2+ \|\mathscr{L}_h
  \psi\|^2$.
\end{lemma}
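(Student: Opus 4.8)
The plan is to reduce the estimate to the basic quadratic form identity associated with $\mathscr{L}_{h}$, combined with the Cauchy--Schwarz and Young inequalities.

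The first (and only substantial) step is to establish that, for every $\psi\in\mathrm{Dom}(\mathscr{L}_{h})$,
\[
\|(-ih\nabla-\mathbf{A})\psi\|^2=\langle\mathscr{L}_{h}\psi,\psi\rangle\,.
\]
On $\mathscr{C}_{0}^\infty(\R^d)$ this is just an integration by parts: since each $L_{j}=-ih\partial_{j}-A_{j}$ is symmetric as a differential operator, $\langle\mathscr{L}_{h}\psi,\psi\rangle=\sum_{j=1}^d\langle L_{j}^2\psi,\psi\rangle=\sum_{j=1}^d\|L_{j}\psi\|^2=\|(-ih\nabla-\mathbf{A})\psi\|^2$. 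To pass from the core to the full domain, recall that $\mathscr{C}_{0}^\infty(\R^d)$ is a core for $\mathscr{L}_{h}$ (it is essentially self-adjoint there), so both sides of the identity are continuous with respect to the graph norm $\|\cdot\|_{\mathscr{L}_{h}}$: the map $\psi\mapsto\langle\mathscr{L}_{h}\psi,\psi\rangle$ obviously is, and $\psi\mapsto(-ih\nabla-\mathbf{A})\psi$ extends to a bounded operator from $(\mathrm{Dom}(\mathscr{L}_{h}),\|\cdot\|_{\mathscr{L}_{h}})$ to $L^2(\R^d)$ in view of the description of $\mathrm{Dom}(\mathscr{L}_{h})$ recalled in the introduction. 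Hence the identity, valid on the core, propagates to all of $\mathrm{Dom}(\mathscr{L}_{h})$. (Alternatively, one may simply note that $\{\psi\in L^2(\R^d):(-ih\nabla-\mathbf{A})\psi\in L^2(\R^d)\}$ is exactly the form domain of the self-adjoint operator $\mathscr{L}_{h}$, and the identity is the definition of its quadratic form.)

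Once the identity is available, the conclusion is immediate. By the Cauchy--Schwarz inequality and Young's inequality $ab\leq\frac12(a^2+b^2)$,
\[
\|(-ih\nabla-\mathbf{A})\psi\|^2=\langle\mathscr{L}_{h}\psi,\psi\rangle\leq\|\mathscr{L}_{h}\psi\|\,\|\psi\|\leq\frac12\|\mathscr{L}_{h}\psi\|^2+\frac12\|\psi\|^2=\frac12\|\psi\|^2_{\mathscr{L}_{h}}\,.
\]
The main (mild) obstacle is thus purely the justification of the form identity for arbitrary elements of the operator domain rather than for smooth compactly supported functions; everything else is elementary.
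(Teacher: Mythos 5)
Your proof is correct and follows essentially the same route as the paper's: both rest on the quadratic form identity $\|(-ih\nabla-\mathbf{A})\psi\|^2=\langle\mathscr{L}_{h}\psi,\psi\rangle$ (which the paper simply invokes "by definition of the domain") followed by Cauchy--Schwarz and Young's inequality. Your extra care in justifying the identity on all of $\mathrm{Dom}(\mathscr{L}_{h})$ via a core argument is a reasonable elaboration of the step the paper takes for granted, not a different approach.
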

\begin{proof}
  We recall that, by definition of the domain, for all
  $\psi\in\mathrm{Dom}(\mathscr{L}_{h})$,
  \[\langle\mathscr{L}_{h}\psi,\psi\rangle=\|(-ih\nabla-\mathbf{A})\psi\|^2=\sum_{j=1}^d\|L_{j}\psi\|^2\,,\]
  so that
  \[\sum_{j=1}^d\|L_{j}\psi\|^2\leq
  \|\psi\|\|\mathscr{L}_{h}\psi\|\,.\]
\end{proof}

\begin{lemma}\label{lem.ell-B}
  There exist $C>0$ and $h_{0}>0$ such that, for all
  $\psi\in\mathscr{S}(\R^d)$ and all $h\in(0,h_{0})$,
  \begin{equation}\label{eq.B}
    \|\langle\mathbf{B}\rangle\psi\|+\|\langle\mathbf{B}\rangle^{\frac{1}{2}}(-ih\nabla-\mathbf{A})\psi\|\leq Ch^{-1}\|\psi\|_{\mathscr{L}_{h}}\,.
  \end{equation}
  Moreover, for all $\psi\in\mathrm{Dom}(\mathscr{L}_{h})$, we have
  \[\mathbf{B}\psi\in L^2(\R^d)\quad \mbox{ and }\quad
  |\mathbf{B}|^{\frac{1}{2}}(-ih\nabla-\mathbf{A})\psi\in
  L^2(\R^d)\,,\]
  and \eqref{eq.B} holds for $\psi\in\mathrm{Dom}(\mathscr{L}_{h})$.

\end{lemma}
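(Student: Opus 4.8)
The plan is to compare the magnetic quadratic form $\langle\mathscr{L}_h\psi,\psi\rangle$ with the multiplication operator $\langle\mathbf{B}\rangle$; the only structural link between the two is the magnetic commutation relation $[L_j,L_k]=ihB_{jk}$. By the density argument of Section~\ref{sec.density} (through Lemma~\ref{lem.agmon}) together with Fatou's lemma, I would first reduce to proving \eqref{eq.B} for $\psi\in\mathscr{S}(\R^d)$: applying it to $\psi_k-\psi_\ell$ shows that $(\mathbf{B}\psi_k)_k$ and $(|\mathbf{B}|^{1/2}(-ih\nabla-\mathbf{A})\psi_k)_k$ are Cauchy in $L^2(\R^d)$ --- hence the two membership statements --- and applying it to $\psi_k$ and letting $k\to\infty$ gives \eqref{eq.B} on $\mathrm{Dom}(\mathscr{L}_h)$. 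In what follows I write $Z=\|\psi\|_{\mathscr{L}_h}$, $X=\|\langle\mathbf{B}\rangle\psi\|$, $Y^2=\sum_j\|\langle\mathbf{B}\rangle^{1/2}L_j\psi\|^2$, and I use repeatedly that, by Assumption~\ref{hyp.0}(iii), $\langle\mathbf{B}\rangle^{1/2}\in\mathscr{P}$ with $|\partial^\alpha\langle\mathbf{B}\rangle^{1/2}|\leq C_\alpha\langle\mathbf{B}\rangle^{1/2}$ and $|\partial^\alpha\langle\mathbf{B}\rangle|\leq C_\alpha\langle\mathbf{B}\rangle$, and that, by Assumption~\ref{hyp.0}(ii), $\langle\mathbf{B}\rangle\asymp\|\mathbf{B}\|\asymp\mathrm{Tr}^+\mathbf{B}$.

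The decisive ingredient is a pointwise (magnetic G\aa rding, of Helffer--Mohamed type) lower bound: there are $c,h_0>0$ such that for $h\in(0,h_0)$ and $v\in\mathscr{S}(\R^d)$,
\begin{equation}\label{eq.pointwiseB}
\langle\mathscr{L}_h v,v\rangle\geq ch\,\langle\langle\mathbf{B}\rangle v,v\rangle\,.
\end{equation}
This sharpens Lemma~\ref{lem.minmag} into a pointwise statement, and I would prove it by an IMS partition of unity at the scale $\rho(x)\sim(h/\|\mathbf{B}(x)\|)^{1/3}$: on each cell $\mathbf{B}$ is nearly constant thanks to Assumption~\ref{hyp.0}(iii), so after a gauge change and linearisation of $\mathbf{A}$ the operator $\mathscr{L}_h$ becomes a constant-field Landau Hamiltonian up to errors of relative size $O(h^{1/3})$, whose ground energy is $h\,\mathrm{Tr}^+\mathbf{B}$; Assumption~\ref{hyp.0}(ii) then replaces $\mathrm{Tr}^+\mathbf{B}$ by $\langle\mathbf{B}\rangle$ with no additive remainder. (Alternatively, \eqref{eq.pointwiseB} may simply be quoted from the analysis behind \cite[Theorem~2.2]{HM96}.)

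Granting \eqref{eq.pointwiseB}, I would run a short bootstrap. Integrating by parts and using $[L_j,\langle\mathbf{B}\rangle]=-ih\,\partial_j\langle\mathbf{B}\rangle$,
\[
Y^2=\sum_j\langle L_j\psi,\langle\mathbf{B}\rangle L_j\psi\rangle=\langle\mathscr{L}_h\psi,\langle\mathbf{B}\rangle\psi\rangle-\sum_j\langle L_j\psi,[L_j,\langle\mathbf{B}\rangle]\psi\rangle\leq ZX+Ch\,ZX\leq 2ZX
\]
for $h$ small, by Cauchy--Schwarz, the bound $|\partial_j\langle\mathbf{B}\rangle|\leq C\|\mathbf{B}\|$, and Lemma~\ref{lem.prelim} (which gives $\sum_j\|L_j\psi\|\leq CZ$). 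Next I would apply \eqref{eq.pointwiseB} to $v=\langle\mathbf{B}\rangle^{1/2}\psi\in\mathscr{S}(\R^d)$: its right-hand side is $chX^2$, while the left-hand side equals $\langle\mathscr{L}_h\psi,\langle\mathbf{B}\rangle\psi\rangle+\langle[\mathscr{L}_h,\langle\mathbf{B}\rangle^{1/2}]\psi,\langle\mathbf{B}\rangle^{1/2}\psi\rangle$. Expanding $[\mathscr{L}_h,\langle\mathbf{B}\rangle^{1/2}]=-ih\sum_j(L_j\,\partial_j\langle\mathbf{B}\rangle^{1/2}+\partial_j\langle\mathbf{B}\rangle^{1/2}\,L_j)$ and distributing half a power of $\langle\mathbf{B}\rangle$ onto each factor (legitimate since $|\partial_j\langle\mathbf{B}\rangle^{1/2}|\leq C\langle\mathbf{B}\rangle^{1/2}$), the commutator term is $\leq C_1 h\big(YX^{1/2}\|\psi\|^{1/2}+X\|\psi\|\big)$, and $\langle\mathscr{L}_h\psi,\langle\mathbf{B}\rangle\psi\rangle\leq ZX$ by Cauchy--Schwarz. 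Inserting $Y\leq(2ZX)^{1/2}$ and $\|\psi\|\leq Z$, both cross-terms are $\leq ChZX$, so $chX^2\leq ZX+ChZX\leq 2ZX$ for $h$ small, i.e. $X\leq Ch^{-1}Z$. Then $Y^2\leq 2ZX\leq Ch^{-1}Z^2$ finishes the estimate.

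The only genuinely delicate point is \eqref{eq.pointwiseB}, and specifically obtaining the sharp power of $h$: in the partition of unity one must balance the localisation error $h^2\rho^{-2}$ against the error $\rho^4\|\mathbf{B}\|^2$ caused by linearising the vector potential on a cell, and it is precisely here that the non-oscillation hypothesis Assumption~\ref{hyp.0}(iii) enters. Everything downstream is bookkeeping --- two integrations by parts, a pair of commutator estimates, and Young's inequality --- the guiding heuristic being that $\langle\mathbf{B}\rangle$ weighs as much as $h^{-1}\mathscr{L}_h$ and each $L_j$ as much as $h^{-1/2}\mathscr{L}_h^{1/2}$.
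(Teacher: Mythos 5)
Your reduction to Schwartz functions and your bootstrap are fine as far as they go: granting the quadratic-form inequality $\langle\mathscr{L}_h v,v\rangle\geq ch\,\|\langle\mathbf{B}\rangle^{1/2}v\|^2$ for $v\in\mathscr{S}(\R^d)$, the two integrations by parts, the commutator bounds and the absorption argument do give $\|\langle\mathbf{B}\rangle\psi\|\leq Ch^{-1}\|\psi\|_{\mathscr{L}_h}$ and then the weighted gradient estimate. The genuine gap is that this weighted, pointwise G\aa rding-type lower bound is itself the hard statement, and you neither prove it nor may you quote it: \cite[Theorem 2.2]{HM96}, i.e.\ Lemma \ref{lem.minmag}, only gives the scalar bound $\mathscr{L}_h\geq ch$ on the bottom of the spectrum, which, applied to $v=\langle\mathbf{B}\rangle^{1/2}\psi$, controls $\|\langle\mathbf{B}\rangle^{1/2}\psi\|$ but not $\|\langle\mathbf{B}\rangle\psi\|$. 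Your IMS sketch at scale $\rho\sim(h/\|\mathbf{B}\|)^{1/3}$ is plausible, but making it rigorous (an admissible covering with a slowly varying scale when $\mathbf{B}$ is unbounded, cell-wise gauge choice and linearisation of $\mathbf{A}$, comparability of $\mathbf{B}$ across a cell via Assumption \ref{hyp.0}, absorption of the cross terms, uniformity of all constants) is a substantially bigger task than the lemma you are trying to prove, so as written the argument is incomplete precisely at its central point.

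The paper avoids any such input. The only structural fact it uses is the one you name, $[L_k,L_\ell]=ihB_{k,\ell}$, but it exploits it directly: writing $\int_{\R^d}|hB_{k,\ell}|^2|\psi|^2\,\dd x=|\langle[L_k,L_\ell]\psi,hB_{k,\ell}\psi\rangle|$, integrating one factor $L$ by parts, and using Assumption \ref{hyp.0} to bound derivatives of $\mathbf{B}$ by $\langle\mathbf{B}\rangle$, one gets $\int_{\R^d}|\mathbf{B}|^2|\psi|^2\,\dd x\leq Ch^{-1}\|\langle\mathbf{B}\rangle\psi\|\,\|\psi\|_{\mathscr{L}_h}$ after combining with the weighted-gradient identity (your first step, which is exactly \eqref{eq.1} in the paper) and Lemma \ref{lem.prelim}; this yields \eqref{eq.B} in a few lines, with no partition of unity and no use of the non-degeneracy beyond $\langle\mathbf{B}\rangle\leq C|\mathbf{B}|$. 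So either supply a complete proof of your weighted lower bound, or switch to the commutator identity, which is the intended and much shorter path.
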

\begin{proof}
  By integration by parts and using Assumption \ref{hyp.0},
  \begin{equation}\label{eq.1}
    \begin{split}
      \int_{\R^d}\langle\mathbf{B}\rangle|(-ih\nabla-\mathbf{A})\psi|^2\dd x&=\langle \langle\mathbf{B}\rangle(-ih\nabla-\mathbf{A})\psi,(-ih\nabla-\mathbf{A})\psi\rangle\\
      &\leq\langle \mathscr{L}_{h}\psi,\langle\mathbf{B}\rangle\psi\rangle+C\|\langle\mathbf{B}\rangle\psi\|\|(-ih\nabla-\mathbf{A})\psi\|\\
      &\leq
      C\|\langle\mathbf{B}\rangle\psi\|\|\psi\|_{\mathscr{L}_{h}}\,.
    \end{split}
  \end{equation}
  Then, we have
  \[\int_{\R^d}|h\mathbf{B}|^2|\psi|^2\dd
  x=\sum_{(k,\ell)\in\{1,\ldots,d\}^2}
  \int_{\R^d}|hB_{k,\ell}|^2|\psi|^2\dd x\] and we write
  \begin{multline*}
    \sum_{(k,\ell)\in\{1,\ldots,d\}^2}\int_{\R^d}|hB_{k,\ell}|^2|\psi|^2\dd x=\sum_{(k,\ell)\in\{1,\ldots,d\}^2}|\langle[L_{k},L_{\ell}]\psi, hB_{k,\ell}\psi\rangle|\\
    \leq
    Ch\|\langle\mathbf{B}\rangle\psi\|\|(-ih\nabla-\mathbf{A})\psi\|+Ch\int_{\R^d}\langle\mathbf{B}\rangle|(-ih\nabla-\mathbf{A})\psi|^2\dd
    x\,,
  \end{multline*}
  where we used an integration by parts and Assumption \ref{hyp.0}.

  By \eqref{eq.1}, it follows
  \[\int_{\R^d}|\mathbf{B}|^2|\psi|^2\dd x\leq
  Ch^{-1}\|\langle\mathbf{B}\rangle\psi\|\|\psi\|_{\mathscr{L}_{h}}\]
  and then
  \[\|\langle\mathbf{B}\rangle\psi\|\leq
  Ch^{-1}\|\psi\|_{\mathscr{L}_{h}}\,.\]
  Using again \eqref{eq.1}, the conclusion follows.
\end{proof}

\subsection{Case $n=1$}
The estimate of Theorem \ref{thm.elliptic} is obvious when $n=0$. Let us consider the case when $n=1$ to explain the principle producing these estimates.

\begin{lemma}\label{lem.ordre2sch}
  There exist $C>0$, $h_{0}>0$ such that, for all
  $\psi\in\mathscr{S}(\R^d)$ and all $h\in(0,h_{0})$,
  \[\|L^2_{1}\psi\|+\|L^2_{2}\psi\|+\|L_{1}L_{2}\psi\|+\|L_{2}L_{1}\psi\|\leq
  Ch^{-1}\|\mathscr{L}_{h}\psi\|\,.\]
\end{lemma}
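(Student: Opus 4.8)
The plan is to estimate $\|L_iL_j\psi\|$ for $i,j\in\{1,2\}$ by writing $L_iL_j$ in terms of the quantities already controlled: the full operator $\mathscr{L}_h=L_1^2+L_2^2$, the magnetic gradient $(-ih\nabla-\mathbf A)\psi$, and the commutator $[L_i,L_j]=ih B_{ij}$, which is $\langle\mathbf B\rangle$-type and handled by Lemma \ref{lem.ell-B}. First I would write, for the off-diagonal terms,
\[
\|L_1L_2\psi\|^2=\langle L_2^*L_1^*L_1L_2\psi,\psi\rangle=\langle L_1L_2\psi,L_1L_2\psi\rangle,
\]
and reorganize using $L_1L_2=L_2L_1+[L_1,L_2]$ together with the identity $\langle L_1^2\psi,L_2^2\psi\rangle$ appearing inside $\langle\mathscr{L}_h\psi,\mathscr{L}_h\psi\rangle$. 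Concretely, expanding $\|\mathscr{L}_h\psi\|^2=\|L_1^2\psi\|^2+\|L_2^2\psi\|^2+2\mathrm{Re}\langle L_1^2\psi,L_2^2\psi\rangle$, and then integrating by parts in $\langle L_1^2\psi,L_2^2\psi\rangle$ to move one $L_1$ and one $L_2$ across, one produces $\|L_2L_1\psi\|^2$ (or $\|L_1L_2\psi\|^2$) plus commutator remainders of the form $\langle (\text{first-order magnetic derivative})\psi,\, ([L_1,L_2]\text{ or its derivative})\psi\rangle$. This is the standard "integration by parts twice" trick for such non-commuting vector fields.

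The commutator remainders are where Assumption \ref{hyp.0}(iii) and Lemma \ref{lem.ell-B} enter. Every remainder term contains at most one factor of the magnetic gradient $(-ih\nabla-\mathbf A)\psi$ (hence is $\lesssim \|\psi\|_{\mathscr L_h}$ by Lemma \ref{lem.prelim}) paired against something built from $hB_{ij}$ and its derivatives, which by Assumption \ref{hyp.0}(iii) is pointwise bounded by $Ch\langle\mathbf B\rangle$; the $\langle\mathbf B\rangle$-weighted quantities are then controlled by $Ch^{-1}\|\psi\|_{\mathscr L_h}$ via \eqref{eq.B}. Counting the powers of $h$: one gets a bound of the shape
\[
\|L_iL_j\psi\|^2\leq \|\mathscr L_h\psi\|^2+Ch\cdot h^{-1}\|\psi\|_{\mathscr L_h}^2\leq C\|\psi\|_{\mathscr L_h}^2,
\]
and since $\mathscr L_h$ is invertible with $\|\mathscr L_h^{-1}\|\leq Ch^{-1}$ (Lemma \ref{lem.minmag}), one has $\|\psi\|\leq Ch^{-1}\|\mathscr L_h\psi\|$, so $\|\psi\|_{\mathscr L_h}\leq Ch^{-1}\|\mathscr L_h\psi\|$, yielding the claimed $Ch^{-1}$. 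For the diagonal terms $\|L_1^2\psi\|,\|L_2^2\psi\|$ one proceeds identically, noting $\|L_1^2\psi\|\leq\|\mathscr L_h\psi\|+\|L_2^2\psi\|$ so it suffices to control one of them, or more symmetrically using the expansion of $\|\mathscr L_h\psi\|^2$ above directly.

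I expect the main obstacle to be the careful bookkeeping of the commutator terms generated by the double integration by parts: one must check that no remainder term carries two uncontrolled magnetic derivatives (which would not close the estimate) and that the worst term really is only $O(h)\times(\text{weighted }L^2)^2$ rather than $O(1)$. The structural reason this works is that $[L_i,L_j]$ is a zeroth-order operator (multiplication by $ihB_{ij}$), so each commutator trades a derivative for a factor of $h\langle\mathbf B\rangle$; Assumption \ref{hyp.0}(iii) guarantees the same holds after differentiating $\mathbf B$. Once the $n=1$ case is organized this way, it makes transparent the induction scheme for general $n$ in Theorem \ref{thm.elliptic}.
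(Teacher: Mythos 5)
Your argument is correct, and its engine is the same as the paper's --- trade each commutator $[L_i,L_j]=ihB_{ij}$ for a factor of $h\langle\mathbf B\rangle$, control the weighted quantities with Lemma \ref{lem.ell-B}, the magnetic gradient with Lemma \ref{lem.prelim}, and convert $\|\psi\|_{\mathscr{L}_h}$ into $Ch^{-1}\|\mathscr{L}_h\psi\|$ with Lemma \ref{lem.minmag} --- but your entry point differs from the paper's. The paper never expands $\|\mathscr{L}_h\psi\|^2$: it sets $f=\mathscr{L}_h\psi$, writes $\mathscr{L}_h(L_j\psi)=L_jf+[\mathscr{L}_h,L_j]\psi$, and pairs with $L_j\psi$, so that the left-hand side is $\|(-ih\nabla-\mathbf A)L_j\psi\|^2=\sum_k\|L_kL_j\psi\|^2$ and the commutator $[\mathscr{L}_h,L_j]=\sum_k\left([L_k,L_j]L_k+L_k[L_k,L_j]\right)$ produces exactly the remainders you describe; this form yields all products $L_kL_j$, $1\le k,j\le d$, at once and is verbatim the template of the induction step for Theorem \ref{thm.elliptic} (with $L_\sigma$ in place of $L_j$), whereas your ``expand the square and integrate by parts twice'' version is tailored to $n=1$. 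Two points of bookkeeping in your sketch deserve care, though neither is a gap: (i) it is not quite true that every remainder carries at most one magnetic derivative of $\psi$ --- the term $\langle[L_2,L_1]L_1\psi,L_2\psi\rangle$ carries two first-order factors, but it is precisely the $\langle\mathbf B\rangle$-weighted gradient square controlled by \eqref{eq.B} (or, sharper, by \eqref{eq.1}), while the other remainder $\langle L_2L_1\psi,[L_1,L_2]\psi\rangle$ carries a second-order factor and must be absorbed by Young, using $h\|\mathbf B\psi\|\le C\|\psi\|_{\mathscr{L}_h}$, into the $\|L_2L_1\psi\|^2$ you have just generated; (ii) your claimed final power $h^{-1}$ does come out, but only if you use the intermediate inequality \eqref{eq.1}, i.e. $\int\langle\mathbf B\rangle|(-ih\nabla-\mathbf A)\psi|^2\,\dd x\le Ch^{-1}\|\psi\|^2_{\mathscr{L}_h}$, rather than the squared form of \eqref{eq.B} (which only gives $h^{-2}$); with the lossier count one lands at $Ch^{-3/2}\|\mathscr{L}_h\psi\|$, which is in fact where the paper's own computation ends and is all that the general bound $h^{-3n/2}$ of Theorem \ref{thm.elliptic} requires.
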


\begin{proof}
  Let us consider $\psi\in\mathscr{S}(\R^d)$ and let
  \[\mathscr{L}_{h}\psi=f\,.\]
  Consider $j\in\{1,\ldots, d\}$. We have
  \[\mathscr{L}_{h}(L_{j}\psi)=L_{j}f+[\mathscr{L}_{h},
  L_{j}]\psi\,,\] and
  \[\|(-ih\nabla-\mathbf{A})(L_{j}\psi)\|^2=\langle L_{j}f,
  L_{j}\psi\rangle+\langle[\mathscr{L}_{h}, L_{j}]\psi,
  L_{j}\psi\rangle\,,\] We have
  \[[\mathscr{L}_{h}, L_{j}]=\sum_{k=1}^d
  [L^2_{k},L_{j}]=\sum_{k=1}^d\left( [L_{k},L_{j}]L_{k}+L_{k}[L_{k},
    L_{j}]\right)\,.\] Thus,
  \[|\langle[\mathscr{L}_{h}, L_{j}]\psi, L_{j}\psi\rangle|\leq
  C\|\mathbf{B}\psi\|\|(-ih\nabla-\mathbf{A})\psi\|+C\int_{\R^d}
  |\mathbf{B}||(-ih\nabla-\mathbf{A})\psi| ^2\dd x\,.\]
  We have, for all $\varepsilon\in(0,1)$,
  \[|\langle L_{j}f, L_{j}\psi\rangle|\leq
  \varepsilon\|L_{j}^2\psi\|^2+C\|f\|^2\,,\] and then
  \begin{multline*}
    \|(-ih\nabla-\mathbf{A})(L_{j}\psi)\|^2\\
    \leq
    C\|f\|^2+C\|\mathbf{B}\psi\|\|(-ih\nabla-\mathbf{A})\psi\|+C\int_{\R^d}
    |\mathbf{B}||(-ih\nabla-\mathbf{A})\psi| ^2\dd x\,.
  \end{multline*}
  With Lemma \ref{lem.prelim}, noting that
  $\|\psi\|_{\mathscr{L}_h} ^2 \leq C (1+ h^{-2})\|f\|^2$, we find
  \[\|(-ih\nabla-\mathbf{A})(L_{j}\psi)\|^2\leq Ch^{-3}\|f\|^2\,.\]
\end{proof}

\subsection{Induction}
Let $n\in\mathbb{N}^*$. Let us assume that, for all
$k\in\{1,\ldots, n\}$, the ellipticity property \eqref{eq.elliptic} is
true.  Let us consider $f,\psi\in\mathscr{S}(\R^d)$ such that
\[\mathscr{L}_{h}\psi=f\,.\]
Consider $\sigma\in\mathfrak{A}(2n)$. Since the functions are in the
Schwartz class, all the following computations are justified.

We have
\[\mathscr{L}_{h}L_{\sigma}\psi=L_{\sigma}f+[\mathscr{L}_{h},
L_{\sigma}]\psi\,,\] and then
\begin{equation}\label{eq.Lalpha}
  \langle\mathscr{L}_{h}L_{\sigma}\psi,L_{\sigma}\psi\rangle=\langle L_{\sigma}f, L_{\sigma}\psi\rangle+\langle[\mathscr{L}_{h}, L_{\sigma}]\psi, L_{\sigma}\psi\rangle\,.
\end{equation}
By using the Cauchy-Schwarz inequality and the induction assumption,
we have for all $\varepsilon\in(0,1)$,
\[|\langle L_{\sigma}f, L_{\sigma}\psi\rangle|\leq
Ch^{-3n}\|\mathscr{L}^n_{h}f\|\|\mathscr{L}^n_{h}\psi\|\leq
Ch^{-3n}\|\mathscr{L}^{n+1}_{h}\psi\|^2+Ch^{-3n}\|\mathscr{L}^{n}_{h}\psi\|^2\,,\]
so that, by Lemma \ref{lem.minmag},
\[|\langle L_{\sigma}f, L_{\sigma}\psi\rangle|\leq
Ch^{-(3n+2)}\|\mathscr{L}^{n+1}_{h}\psi\|^2\,.\]
Let us now deal with
$\langle[\mathscr{L}_{h}, L_{\sigma}]\psi, L_{\sigma}\psi\rangle$. The
commutator $[\mathscr{L}_{h}, L_{\sigma}]$ is the sum of various
terms. Each of them is the composition of at most $2n-2$ of the
$L_{j}$ and with exactly one of the $B_{k,\ell}$. By commuting the
$B_{k,\ell}$ to put it on the left, and using Assumption \ref{hyp.0},
we get
\[\|[\mathscr{L}_{h}, L_{\sigma}]\psi\|\leq
C\sum_{\tau\in\mathfrak{A}(2n-2)}\|\langle\mathbf{B}\rangle
L_{\tau}\psi\|\,.\]
By applying Lemma \ref{lem.ell-B}, and then the induction assumption,
we get
\[\|[\mathscr{L}_{h}, L_{\sigma}]\psi\|\leq
Ch^{-1}\sum_{\tau\in\mathfrak{A}(2n-2)}\|
L_{\tau}\psi\|_{\mathscr{L}_{h}}\leq
Ch^{-(3n-1)/2}\|\mathscr{L}^n_{h}\psi\|\,.\] Thus, we deduce
\[|\langle\mathscr{L}_{h}L_{\sigma}\psi,L_{\sigma}\psi\rangle|\leq
Ch^{-3n-1/2}\|\mathscr{L}^{n+1}_{h}\psi\|^2\,.\]
This shows that, for all $\gamma\in\mathfrak{A}(2n+1)$,
\begin{equation}\label{eq.2n+1}
  \|L_{\gamma}\psi\|\leq Ch^{-(3n/2+1)}\|\mathscr{L}^{n+1}_{h}\psi\|\,.
\end{equation}
Now, we want to get the control for $\gamma \in
\mathfrak{A}(2n+2)$.
Let $\sigma \in \mathfrak{A}(2n+1)$. We consider again
\eqref{eq.Lalpha}. By integration by parts, we can write
\[\langle L_{\sigma}f, L_{\sigma}\psi\rangle=\langle
L_{\check\sigma}f, L_{\hat\sigma}\psi\rangle\,,\]
with $\check\sigma \in \mathfrak{A}(2n)$ and
$\hat\sigma\in\mathfrak{A}(2n+2)$. Thus, by Cauchy-Schwarz, and the
induction assumption, for all $\varepsilon>0$, there exists $C>0$ such
that
\begin{equation}\label{eq.RHS1}
  |\langle L_{\sigma}f, L_{\sigma}\psi\rangle|\leq\varepsilon\|L_{\hat\sigma}\psi\|^2+Ch^{-3n/2}\|\mathscr{L}^{n+1}_{h}\psi\|^2\,.
\end{equation}
As previously, we have
\[\|[\mathscr{L}_{h}, L_{\sigma}]\psi\|\leq
C\sum_{\tau\in\mathfrak{A}(2n-1)}\|\langle\mathbf{B}\rangle
L_{\tau}\psi\|\,.\]
We use Lemma \ref{lem.ell-B} and \eqref{eq.2n+1} to find
\[\|[\mathscr{L}_{h}, L_{\sigma}]\psi\|\leq
Ch^{-1}\sum_{\tau\in\mathfrak{A}(2n-1)}\|
L_{\tau}\psi\|_{\mathscr{L}_{h}}\leq
Ch^{-(3n/2+2)}\|\mathscr{L}^{n+1}_{h}\psi\|\,.\]
Then, with Cauchy-Schwarz and \eqref{eq.2n+1}, we get
\begin{equation}\label{eq.RHS2}
  |\langle[\mathscr{L}_{h}, L_{\sigma}]\psi, L_{\sigma}\psi\rangle|\leq Ch^{-(3n+3)}\|\mathscr{L}^{n+1}_{h}\psi\|^2\,.
\end{equation}
From \eqref{eq.Lalpha}, \eqref{eq.RHS1}, and \eqref{eq.RHS2}, summing
over $\sigma \in \mathfrak{A}(2n+1)$, and choosing $\varepsilon$ small
enough, we get \eqref{eq.elliptic} with $n$ replaced by $n+1$.

This achieves the proof of Theorem \ref{thm.elliptic} when
$\psi\in\mathscr{S}(\R^d)$ and it remains to use the discussion of
Section \ref{sec.density}.

\section{Application to the evolution problem}\label{sec.evol}
We can now prove Theorem \ref{thm.evol}. Let
$\psi_{0}\in\mathscr{S}(\R^d)$. We denote by $\psi(\cdot)$ the
solution to the Schr\"odinger equation \eqref{eq.SE}. 

\begin{notation}
For $\kappa, \lambda \in \N$, we define $\Pi_{\kappa,\lambda}$
the set of the operators $P$ that are composition of operators taken
among $(L_j)_{1\leq j\leq d}$ and and $(x_k)_{1\leq k\leq d}$ with
$\kappa$ occurences of $x$ and $\lambda$ occurences of $L$. We also set $\displaystyle{\Pi=\bigcup_{(\kappa,\lambda)\in\N^2}\Pi_{\kappa,\lambda}}$.
\end{notation}

The aim of this
section is to prove the following proposition.
\begin{proposition}\label{prop.P}
  Let $(\kappa,\lambda)\in\mathbb{N}^2$ and consider $P\in\Pi_{\kappa,\lambda}$ . There exist
  $h_{0}>0$, $C\geq 0$ and $N\in\mathbb{N}$ such that, for all
  $t\geq 0$, and all $\psi_{0}\in\mathscr{S}(\R^d)$, for
  $\lambda \leq 2n \leq \lambda+1$,
  \[\|P\psi(t)\|\leq C h^{-N}\left(1+ t^\kappa\right) \sum_{|\alpha|
    \leq \kappa : |\alpha| + \nu \leq \kappa + n} \|\mathscr{L}_h ^\nu
  x^\alpha \psi_0\|\,.\]
\end{proposition}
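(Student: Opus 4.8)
The plan is to argue by induction on $\kappa+\lambda$, using the Schrödinger equation to trade time derivatives for powers of $\mathscr{L}_h$ and the elliptic estimate of Theorem~\ref{thm.elliptic} to trade powers of $\mathscr{L}_h$ for the magnetic derivatives $L_\sigma$. The base case $\kappa=\lambda=0$ is unitarity: $\|\psi(t)\|=\|\psi_0\|$. For $\lambda\geq 1$ and $\kappa=0$, one writes $P=L_j Q$ with $Q\in\Pi_{0,\lambda-1}$ and applies Theorem~\ref{thm.elliptic}: since $L_\sigma$ commutes with the flow (being built from $\mathscr{L}_h$ only in the sense that $[\mathscr{L}_h,L_\sigma]$ is lower order — here more precisely one uses that $\mathscr{L}_h^n$ commutes with $e^{it\mathscr{L}_h/h}$), we get $\sum_{\sigma\in\mathfrak{A}(2n)}\|L_\sigma\psi(t)\|\leq Ch^{-3n/2}\|\mathscr{L}_h^n\psi(t)\|=Ch^{-3n/2}\|\mathscr{L}_h^n\psi_0\|$, which is the claimed bound with $\kappa=0$ (taking $\nu=n$, $\alpha=0$), since $\lambda\leq 2n$.

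The heart of the matter is the induction step that introduces a factor of $x$. Suppose $P\in\Pi_{\kappa,\lambda}$ with $\kappa\geq 1$; write $P=P' x_k$ (or insert $x_k$ at the appropriate slot — one should first commute all the $x$'s to the right, paying commutator terms $[L_j,x_k]=-ih\delta_{jk}$ which are lower order in $\lambda$ and thus handled by the induction hypothesis). So it suffices to estimate $\|P'x_k\psi(t)\|$ with $P'\in\Pi_{\kappa-1,\lambda}$. The key computation is a Heisenberg-type evolution law for $x_k$: differentiating $x_k\psi(t)$ in time using \eqref{eq.SE},
\[
-ih\partial_t\big(x_k\psi(t)\big)=\mathscr{L}_h\big(x_k\psi(t)\big)-[\mathscr{L}_h,x_k]\psi(t),
\]
and $[\mathscr{L}_h,x_k]=[L_k^2,x_k]$-type terms $=-2ih L_k$ (up to lower order). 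Hence $x_k\psi(t)$ solves an inhomogeneous Schrödinger equation with source $\sim L_k\psi(t)$; by Duhamel,
\[
x_k\psi(t)=e^{it\mathscr{L}_h/h}\Big(x_k\psi_0+\tfrac{?}{?}\int_0^t e^{-is\mathscr{L}_h/h}\,2iL_k\psi(s)\,\dd s\Big)+\cdots,
\]
so that $\|P' x_k\psi(t)\|\leq \|P'x_k\psi_0\text{-type evolved}\|+C\int_0^t\|P' L_k\psi(s)\|\,\dd s$, where now $P'L_k\in\Pi_{\kappa-1,\lambda+1}$ has $\kappa$ decreased by one, and the induction hypothesis applies to the integrand, producing the factor $t^{\kappa-1}$ that integrates to $t^\kappa$. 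Alternatively, and perhaps more cleanly, one estimates $x_k\psi(t)$ directly by a differential inequality: $\partial_t\|P\psi(t)\|$ is controlled by lower-$\kappa$ quantities, and a Grönwall/iteration argument in $\kappa$ yields the polynomial-in-$t$ growth.

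The main obstacle I expect is bookkeeping the interplay between the two reductions: commuting $x$'s past $L$'s generates many terms, and each application of Duhamel brings an $L_k$ that must itself eventually be absorbed by Theorem~\ref{thm.elliptic}, which costs a power of $h$. One must check that the total number of $\mathscr{L}_h$'s that ever act on $\psi_0$ stays bounded by $\kappa+n$ with $|\alpha|\leq\kappa$ — this is exactly the constraint $|\alpha|+\nu\leq\kappa+n$ in the statement — and that the powers of $h$ accumulated through the (finitely many, at most $\kappa+\lambda$) induction steps remain a fixed negative power $h^{-N}$ with $N$ depending only on $\kappa,\lambda$. Tracking these exponents carefully, and making sure the time integrations only ever raise the $t$-power and never the $h$-power, is the delicate part; everything else is the elliptic estimate plus unitarity plus Duhamel's formula.
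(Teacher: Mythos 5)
There is a genuine gap at the heart of your induction step. After Duhamel you write, in effect,
\[
P'x_k\psi(t)=P'e^{\frac{it\mathscr{L}_h}{h}}(x_k\psi_0)+\int_0^t P'e^{\frac{i(t-s)\mathscr{L}_h}{h}}\,2L_k\psi(s)\,\dd s\,,
\]
and then bound the two terms by $\|P'x_k\psi_0\|$ and $\int_0^t\|P'L_k\psi(s)\|\,\dd s$. This uses unitarity as if $\|P'e^{i\tau\mathscr{L}_h/h}g\|\leq\|P'g\|$, which is false: $P'$ contains factors $x_j$ and $L_j$ and does not commute with the propagator, and unitarity is only an $L^2$ statement. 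Controlling $\|P'e^{i\tau\mathscr{L}_h/h}g\|$ in terms of data at time $0$ is essentially the proposition you are trying to prove, so as written the step is circular. The paper is structured precisely to avoid this: the only operators ever pushed through $e^{it\mathscr{L}_h/h}$ are powers $\mathscr{L}_h^n$, which do commute with the flow. Concretely, the proof runs a two-level induction, with an auxiliary statement $\mathscr{Q}_\kappa$ bounding $\|\mathscr{L}_h^n x^\alpha\psi(t)\|$: one applies Duhamel to $\mathscr{L}_h^n x^\alpha\psi$, where $[x^\alpha,\mathscr{L}_h]=hP_1$ with $P_1\in\Pi_{\kappa,1}$ (no magnetic field factors appear, since $x^\alpha$ commutes with $\mathbf{A}$), and the integrand $\|\mathscr{L}_h^nP_1\psi(s)\|$ is estimated \emph{at time $s$} by the inductive statement $\mathscr{P}_\kappa$ — not pulled back to time $0$ through the propagator. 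A general $P\in\Pi_{\kappa+1,\lambda}$ is then handled by commuting all the $x$'s to the right, $P=P_2+P_3x^\alpha$ with $P_2$ of lower $\kappa$ and $P_3\in\Pi_{0,\lambda}$, and applying Theorem~\ref{thm.elliptic} at time $t$ to get $\|P_3x^\alpha\psi(t)\|\leq Ch^{-3n/2}\|\mathscr{L}_h^nx^\alpha\psi(t)\|$, which $\mathscr{Q}_{\kappa+1}$ controls. Your proposal lacks this mechanism, and it is the key idea, not mere bookkeeping.

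Your fallback suggestion (a differential inequality/Grönwall argument on $\|P\psi(t)\|$ for general $P$) runs into a second problem: $[\mathscr{L}_h,P]$ for $P$ containing $L_j$'s produces factors of $B_{k,\ell}$ and its derivatives, which are unbounded under Assumption~\ref{hyp.0} and do not belong to $\Pi$; one would have to reabsorb them via Lemma~\ref{lem.ell-B}, and the argument would no longer close within your stated induction. The paper sidesteps this by only ever commuting $\mathscr{L}_h$ against $x^\alpha$. Your base cases ($\kappa=0$, and the role of $\lambda\leq 2n\leq\lambda+1$) do match the paper, but the induction step as proposed does not work without the $\mathscr{Q}/\mathscr{P}$ splitting or an equivalent device.
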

This proposition implies the control of the Schwartz semi-norms and
achieves the proof of Theorem \ref{thm.evol}. Indeed, from Sobolev embeddings, for all $k\in\N$, there exists $K \in \N$ such that for all $ f \in \mathscr{S}(\R^d)$, 
\[p_k(f) \leq \max_{|\alpha|, |\beta| \leq K}\ \|
x^\alpha \partial^\beta f\|\,.\]
Using now that $\partial_j = (-ih)^{-1} (L_j +A_j)$, and
$A \in \mathscr{P}$, there exists $C>0$ and $N\in\mathbb{N}$ such
that, for all $f\in L^2(\R^d)$, if $\norm{P f}<+\infty$ for all
$P\in \Pi$, then $x^\alpha\partial^\beta f\in L^2(\R^d)$ for all
$\alpha,\beta\in \N^d$, and
\[\|x^\alpha \partial^\beta f \| \leq Ch ^{-N} \sum_{P \in R} \|P f\|\,,\]
where $R$ is a finite part of $\Pi$. Then, Proposition~\ref{prop.P} implies, for $N$, $\kappa$ and $n$ large enough,
\[\|x^\alpha \partial^\beta \psi(t)\| \leq Ch^{-N} \sum_{P\in R} \left\|P \psi(t)\right\| \leq Ch^{-N} (1+t^\kappa) \sum_{|\gamma| \leq \kappa, |\nu|\leq n} \|\mathscr{L}_h ^\nu x^\gamma \psi_0\|\,.\]
Finally, we conclude by
\[\|\mathscr{L}_h ^\nu x^\alpha \psi_0\| \leq C \|(1+x^2)^m\mathscr{L}_h ^\nu x^\alpha \psi_0\|_\infty \leq C p_K(\psi_0)\,,\]
for $m$, $K$ large enough.
%

\subsection{Case when $\kappa=0$}
For all $t\geq 0$, we have, by definition,
\[\psi(t)=e^{\frac{it}{h}\mathscr{L}_{h}}\psi_{0}\,.\]
For all $\ell\in\mathbb{N}$, we get
\[\mathscr{L}_{h}^\ell\psi(t)=e^{\frac{it}{h}\mathscr{L}_{h}}\mathscr{L}^\ell_{h}\psi_{0}\,.\]
Applying Theorem \ref{thm.elliptic}, this establishes the estimate of
Proposition \ref{prop.P} when $\kappa=0$.

\subsection{Case when $\kappa=1$}
Before starting the induction procedure, let us understand first the
mechanism with only one occurence of $x$. Let $j\in\{1,\ldots,
d\}$. We have
\[-ih\partial_{t}(x_{j}\psi)=\mathscr{L}_{h}(x_{j}\psi)+[x_{j},\mathscr{L}_{h}]\psi\,.\]
Note that $[x_{j},\mathscr{L}_{h}]=2hL_{j}$. With the Duhamel formula,
we have, for all $t\geq 0$,
\begin{equation}\label{eq.duh}
  x_{j}\psi(t)=e^{\frac{it}{h}\mathscr{L}_{h}}(x_{j}\psi_{0})+\int_{0}^t e^{\frac{i(t-s)}{h}\mathscr{L}_{h}}2L_{j}\psi(s)\dd s\,.
\end{equation}
With Lemma \ref{lem.prelim}, we get
\[\|x_{j}\psi(t)\|\leq
\|x_{j}\psi_{0}\|+2\int_{0}^t\|L_{j}\psi(s)\|\dd s\leq
\|x_{j}\psi_{0}\|+\sqrt{2}\int_{0}^t\|\psi(s)\|_{\mathscr{L}_{h}}\dd s
\,.\] Since the evolution is unitary, we get, for all $t\geq 0$,
\[\|x_{j}\psi(t)\|\leq
\|x_{j}\psi_{0}\|+2\int_{0}^t\|L_{j}\psi(s)\|\dd s\leq
\|x_{j}\psi_{0}\|+t\sqrt{2}\|\psi_{0}\|_{\mathscr{L}_{h}} \,.\]
More generally, with \eqref{eq.duh}, we have, for all
$\ell\in\mathbb{N}$,
\[\mathscr{L}^\ell_{h}x_{j}\psi(t)=e^{\frac{it}{h}\mathscr{L}_{h}}\mathscr{L}^{\ell}_{h}(x_{j}\psi_{0})+\int_{0}^t
e^{\frac{i(t-s)}{h}\mathscr{L}_{h}}2\mathscr{L}^{\ell}_{h}L_{j}\psi(s)\dd
s\,.\] so that
\[\begin{split}
  \|\mathscr{L}^\ell_{h}x_{j}\psi(t)\|&\leq\|\mathscr{L}^{\ell}_{h}(x_{j}\psi_{0})\|+2\int_{0}^t \|\mathscr{L}^{\ell}_{h}L_{j}\psi(s)\|\dd s\\
  &\leq \|\mathscr{L}^{\ell}_{h}(x_{j}\psi_{0})\|+Ch^{-N}\int_{0}^t\|\mathscr{L}_{h}^{\ell+1}\psi(s)\|\dd s\\
  &\leq\|\mathscr{L}^{\ell}_{h}(x_{j}\psi_{0})\|+Cth^{-N}\|\mathscr{L}_{h}^{\ell+1}\psi_{0}\|\,.
\end{split}
\]
It remains to apply Theorem \ref{thm.elliptic} and to commute the
$x_{j}$ with the $L_{k}$.

\subsection{Induction}
Let us now end the proof of Proposition \ref{prop.P} by induction. We set the following two induction assumptions. For $\kappa\in\N$, let
\begin{multline*}
  \mathscr{Q}_\kappa : \forall n\in\N, \, \forall \alpha\in\N^d, \, |\alpha| =\kappa, \, \exists N \in \N \ \text{s.t.} \\
  \|\mathscr{L}_h ^n x^\alpha \psi(t)\| \leq \|\mathscr{L}_h ^n
  x^\alpha \psi_0\| +C \sum_{|\beta| \leq \kappa-1 : |\beta| + \nu
    \leq \kappa + n}\|\mathscr{L}_h ^\nu x^\beta \psi_0\| h^{-N}
  (1+t^\kappa)\,;
\end{multline*}
and
\begin{equation*}
  \mathscr{P}_\kappa : \forall P \in \Pi_{\kappa,\lambda}, \ \|P \psi(t)\| \leq C \sum_{|\alpha| \leq \kappa : |\alpha| + \nu \leq \kappa +n} \|\mathscr{L}_h ^\nu x^\alpha \psi_0\| h^{-N} (1+t^\kappa), \ \text{for} \, \lambda \leq 2n \leq \lambda +1\,.
\end{equation*}
We have proved propositions $\mathscr{P}_0$, $\mathscr{Q}_0$ and
$\mathscr{Q}_1$. We assume now that for a given $\kappa\in \N^*$, for
any $k\leq\kappa$, $\mathscr{Q}_k$ and $\mathscr{P}_k$ hold, and we
prove $\mathscr{P}_{\kappa+1}$ and $\mathscr{Q}_{\kappa +1}$. We begin
with $\mathscr{Q}_{\kappa+1}$. Let $\alpha\in\N^d$, with
$|\alpha| = \kappa +1$ and $n\in\N$. We have
\[-ih\partial_t \big(\mathscr{L}_h ^n x^\alpha \psi(t)\big) =
\mathscr{L}_h ^{n+1} x^\alpha \psi(t) + \mathscr{L}_h ^n [x^\alpha,
\mathscr{L}_h] \psi(t)\,.\] Then, noting that
\[[x^\alpha, \mathscr{L}_h] = h P_1\]
with $P_1$ a sum of elements in $\Pi_{\kappa,1}$, we get from the
Duhamel formula
\begin{equation*}
  \mathscr{L}_h ^n x^\alpha \psi(t) = e^{\frac{it}{h}\mathscr{L}_h} \mathscr{L}_h ^n x^\alpha \psi_0 + i \int_0 ^t e^{i\frac{t-s}{h} \mathscr{L}_h} \mathscr{L}_h ^n P_1 \psi(s) \mathrm{d}s\,.
\end{equation*}
As $\mathscr{L}_h ^n P_1$ is a sum of elements in
$\Pi_{\kappa,(2n+1)}$, we can apply $\mathscr{P}_\kappa$, and
integrating in time and using the unitariness of
$e^{i\frac{t}{h}\mathscr{L}_h}$, we find, for some integer $N \in \N$
\begin{equation*}
  \|\mathscr{L}_h ^n x^\alpha \psi(t)\| \leq \|\mathscr{L}_h ^n x^\alpha \psi_0\| + C h^{-N} \sum_{|\beta| \leq \kappa : |\beta| + \nu \leq \kappa + n+1} \|\mathscr{L}_h ^{\nu} x^\beta \psi_0\| (1+t^{\kappa+1})
\end{equation*}
that proves $\mathscr{Q}_{\kappa + 1}$. It remains to prove
$\mathscr{P}_{\kappa+1}$. We consider so some
$P \in \Pi_{\kappa+1,\lambda}$, for a given $\lambda \in \N$. Then,
because of the commutation relation of the $(x_k)_{1\leq k \leq d}$
with the $(L_j)_{1 \leq j \leq d}$, that is
$[x_k,L_j] = -ih\delta_{k,j}$, there is $\alpha \in \N^d$,
$|\alpha| = \kappa+1$ such that
\[P = P_2 + P_3 x^\alpha\]
with $P_2$ a sum of elements in $\Pi_{\kappa,\lambda-1}$ and
$P_3 \in \Pi_{0,\lambda}$. So
\[\|P \psi(t)\| \leq \|P_2 \psi(t)\| + \|P_3 \psi(t)\|\,.\]
Then, applying $\mathscr{P}_\kappa$, we get, for some integer $N\in\N$
\begin{equation}\label{eq.P2}
  \|P_2 \psi(t)\| \leq C h^{-N} (1+t^\kappa) \sum_{|\beta|\leq \kappa : |\beta|+ \nu \leq \kappa + n} \|\mathscr{L}_h ^\nu x^\beta \psi_0\|
\end{equation}
and applying Theorem~\ref{thm.elliptic} along with
$\mathscr{Q}_{\kappa+1}$, for some integer $N\in\N$, and for
$\lambda \leq 2n \leq \lambda+1$, we have
\begin{equation}\label{eq.P3}
  \begin{split}
    \|P_3 x^\alpha \psi(t)\| &\leq C h^{-n}\|\mathscr{L}^n x^\alpha \psi(t)\| \\
    &\leq C h^{-N} (1+t^{\kappa+1}) \left( \|\mathscr{L}^n x^\alpha
      \psi_0\| + \sum_{|\beta| \leq \kappa : |\beta| + \nu \leq \kappa
        + n +1} \|\mathscr{L}_h ^{\nu} x^\beta \psi_0\| \right).
  \end{split}
\end{equation}
Now, gathering \eqref{eq.P2} and \eqref{eq.P3}, we find, for some
integer $N$,
\[\|P \psi(t)\| \leq C h^{-N} (1+t^{\kappa+1}) \sum_{|\beta| \leq
  \kappa +1: |\beta| + \nu \leq \kappa+n+1} \|\mathscr{L}_h ^\nu
x^\beta \psi_0\|\]
that proves $\mathscr{P}_{\kappa+1}$ and achieves the proof of
Proposition~\ref{prop.P}.

\subsection*{Acknowledgments}
The authors are grateful to Karel Pravda-Starov for sharing useful
references.

\end{document}